\newcommand{\cl}[1]{{\mathcal{#1}}}
\newtheorem{theorem}{Theorem}[section]
\newtheorem{lemma}[theorem]{Lemma}
\newtheorem{prop}[theorem]{Proposition}
\theoremstyle{definition}
\newtheorem{definition}[theorem]{Definition}
\theoremstyle{remark}
\numberwithin{equation}{section}
\def\ca{{\mathcal A}}
\def\cc{{\mathcal C}}
\def\cd{{\mathcal D}}
\def\ce{{\mathcal E}}
\def\cf{{\mathcal F}}
\def\cg{{\mathcal G}}
\def\cl{{\mathcal L}}
\def\cam{{\mathcal M}}
\def\car{{\mathcal R}}
\def\cas{{\mathcal S}}
\def\ct{{\mathcal T}}
\def\bc{{\mathbb C}}
\def\bn{{\mathbb N}}
\def\br{{\mathbb R}}
\def\bt{{\mathbb T}}
\def\a{\alpha}
\def\d{\delta}
\numberwithin{equation}{section}
\def\ca{{\mathcal A}}
\def\cc{{\mathcal C}}
\def\cd{{\mathcal D}}
\def\ce{{\mathcal E}}
\def\cf{{\mathcal F}}
\def\cg{{\mathcal G}}
\def\cl{{\mathcal L}}
\def\cam{{\mathcal M}}
\def\car{{\mathcal R}}
\def\cas{{\mathcal S}}
\def\ct{{\mathcal T}}
\def\bc{{\mathbb C}}
\def\bn{{\mathbb N}}
\def\br{{\mathbb R}}
\def\bt{{\mathbb T}}
\def\a{\alpha}
\def\d{\delta}
\begin{document}
\title[Weak Derivatives and Reflexivity] {Higher Weak Derivatives and Reflexive Algebras of Operators }
\author{Erik Christensen}
\address{Dept. Math. U. Copenhagen, Denmark}
\curraddr{}
\email{echris@math.ku.dk}
\dedicatory{Dedicated to R. V. Kadison on the occasion of his ninetieth birth day.}
\subjclass[2010]{ Primary: 46L55,  58B34. Secondary: 37A55, 47D06, 81S05.}
\date{\today}

\begin{abstract}
Let $D$ be a self-adjoint operator on a Hilbert space $H$ and $x$ a bounded operator on $H.$ We say that $x$ is $n$ times weakly $D-$differentiable, if for any pair of vectors $\xi, \eta$ from  $H$ the function  $ \langle e^{itD}xe^{-itD}\xi, \eta\rangle$ is $n$ times differentiable. We give several characterizations of $n$ times weak differentiability, among which, one is original. These results are used to show that for a von Neumann algebra $\cam$ on $H$ the algebra of $n$ times weakly $D-$differentiable operators in $\cam$ has a natural representation as a reflexive subalgebra of $B(H\otimes \bc^{(n+1)}).$  
\end{abstract}

\maketitle

\section{Introduction}

Let $D$ be a self-adjoint, {\em usually unbounded,} operator on a Hilbert space $H$ and $x$ a bounded operator on $H,$  then {\em Quantum Mechanics,} \cite{JoN} {\em Operator Algebra} \cite{KR} and {\em Noncommutative Geometry} \cite{AC} offer plenty of reasons why we should be interested in operators that are formed as commutators $[D,x] = Dx - xD.$ In  noncommutative geometry we want to find a set-up such that classical smooth structures may be described in a language based on operators on a Hilbert space. A derivative is described in terms of a commutator $[D,x]$ and a higher derivative via an iterated commutator $[D, [D, \dots,[D,x]\dots ]],$ so a basic question is to determine the set of operators for which such an iterated commutator makes sense.  It is not clear when a commutator such as $[D,x]$ is densely defined and bounded on its domain of definition, and for two bounded operators $x, y $ such that $[D,x]$ and $[D,y]$ are bounded and densely defined the sum of the commutators and/or  the commutator 
$[D,xy]$  may not be densely defined, so the expression
 $[D,x]$ does not define a derivation on a subalgebra of $B(H)$ in a canonical way.  In the article \cite{EC} we realized that the concept we named weak $D-$differentiability provides a set-up, which may be used to decide for which bounded operators $x$ the commutator $[D,x] $ should be defined. We say that a bounded operator $x$ on $H$ is weakly $D-$differentiable if for each pair of vectors $\xi, \eta$ in $H$ the function $ \langle e^{itD}xe^{-itD}\xi, \eta\rangle$ is differentiable. For a weakly $D-$differentiable operator  $x$ the commutator $[D,x]$ is then defined and bounded on all of the domain of $D,$ so it is possible to define a derivation $\d_w$ from the algebra of weakly $D-$differentiable operators into $B(H).$   We were later informed that the concept of weak $D-$differentiability, the algebra property of the weakly differentiable operators and the derivation $\d_w$ are well  known by researchers in mathematical physics \cite{AMG} and \cite{GGM}, so according to the notation of the  book \cite{AMG}, see page 192,  the mentioned algebra is $C^1(D,H).$ We will adopt this notation but modify it such that it makes it possible to look at those elements of a C*-algebra $\ca$ acting on $H$ which are weakly $D-$differentiable. This subalgebra of $\ca$ is then denoted $C^1(\ca, D).$   First of all we will like to study the algebra of higher weak derivatives, which with the notation of \cite{AMG} is the algebra $C^n(D,H),$ and the algebra of $n$ times weakly $D-$differentiable operators inside a C*-algebra $\ca$ on $H$ is $C^n(\ca, D) := C^n(D,H) \cap \ca.$
 In section 4 we give several characterizations of those operators that are $n$ times weakly $D-$differentiable, and we would like to mention here, that a bounded operator $x$ is $n$ times
 weakly $D-$differentiable if and only if for any $k $ in $\{1, \dots, n\} $ the $k'$th commutator $[D,[ D, \dots,[D,x]\dots]] $ is defined and bounded on dom$(D^k).$ This is known to many mathematicians, but we could not find a reference where the details are easy to follow, so we have included a proof here. This characterization of $n$ times weak $D-$differentiability  will be crucial for the results of section 5 on reflexive algebras. We also give a characterization of higher weak differentiability based on an embedding of the higher commutators $[D, [D, \dots [D, a ] \dots ]] $ into a linear space consisting of infinite matrices of  bounded operators.  This set-up is original, and we hope that it will turn out to be a useful frame inside which some operator theoretical  questions can be dealt with in a way which avoids the tiresome considerations of the validity of products and sums of operators. After the article \cite{EC} was accepted for publication and proof read, we realized, that the one parameter group of automorphisms of $B(H)$ given by $B(H) \ni x \to e^{itD}xe^{-itD} \in B(H)$  is actually a so-called {\em adjoint semigroup  on a dual Banach space}.  Adjoint semigroups were first studied in \cite{RP}, and \cite{JaN} contains a survey of the general theory of adjoint semigroups. Our usage of the general theory is  limited, but several things could have been presented in an easier way in \cite{EC}, if we had been able to make references to \cite{JaN}.

\section{  Weak and higher  weak differentiability} 
In order to avoid confusion  we will like to clear up a point which has not been presented in an optimal way in \cite{EC}. The Definition 1.1 of  \cite{EC} defines a bounded operator $x$ to be weakly $D-$differentiable if there exists a bounded operator $b$ on $H$ such that for any pair of vectors $\xi, \eta$ in $H$ we have
$$\underset{t \to 0}{\lim}|\langle \big( \frac{e^{itD}x e^{-itD} -x}{t} - b\big) \xi, \eta \rangle = 0.$$   
This definition implies that for any $\xi, \eta $ the function 
$$t \to \langle e^{itD}xe^{-itD}\xi, \eta \rangle$$
 is differentiable at $t=0,$ and it is stated, but not explicitly proven that this latter property implies weak $D-$differentiability as defined via a weak derivative $b.$ It is quite easy to see that the two sorts of weak $D-$differentiability are equivalent and all the arguments are presented in \cite{EC}, but the consequences are not made sufficiently clear. The right formal definition of weak $D-$differentiability then becomes as follows.
 \begin{definition}
A bounded operator $x$ on $H$ is weakly $D-$differenti-able if for any pair of vectors $\xi, \eta$ in $H$ the function $t \to \langle e^{itD} xe^{-itD} \xi, \eta \rangle $ is differentiable at $t=0.$ 
\end{definition} 

To see that our present definition of weak $D-$differentiability implies the existence of a weak derivative, i.e. a bounded operator $b$ such that Definition 1.1 of  \cite{EC} is satisfied, we refer the reader to the proof of (ii) $\Rightarrow $ (iii) in Theorem 3.8 of \cite{EC}. That step is the crucial part of the proof, and it is  based on the uniform boundedness principle applied to all the operators $$\{\frac{e^{itD}xe^{-itD} -x }{t} : t \neq 0 \}.$$ 
This set is bounded because any function such as $t \to  \langle e^{itD}xe^{-itD} \xi, \eta \rangle  $  is differentiable at $t=0,$ and hence the set of values  
$$\{\langle \frac{e^{itD}xe^{-itD} -x }{t} \xi, \eta \rangle : t \neq 0 \}$$
 is bounded and the principle applies.     
 The existence of $b$ then follows from the rest of Theorem 3.8 of \cite{EC}. We will quote that theorem below and define the higher weak derivatives, but first we will recall a couple of other forms of $D-$differentiability.
    
    We say that a bounded operator $x$ 
 is uniformly $D-$differentiable if the function $t \to e^{itD}xe^{-itD} $ is differentiable at $t=0,$ with respect to the norm topology on $B(H).$ In analogy with the definition of weak $D-$differentiability we say that $x$ is strongly $D-$differentiable if for each vector $\xi$ in $H$ the function $t \to e^{itD}xe^{-itD}\xi $  is differentiable  at $t =0$ with respect to the norm topology on $H.$  It follows from \cite{EC} that weak and strong $D-$differentiability are equivalent but uniform $D-$differentiability is in general    a stronger property.
 
The book \cite{AMG} studies strong $D-$differentiability in its Chapter 5, and it mentions that this concept is equivalent to weak $D-$differentiability, which we prefer to work with, because it seems to be closer to the classical concepts involving differentiable functions on $\br.$ Anyway we already have  adopted the notation from \cite{AMG}, but modified it so that the C*-algebra $\ca$ is part of the notation too, so we define:
 
\begin{definition} 
Let $\ca$ be a C*-algebra on a Hilbert space $H$ and $D$ a self-adjoint operator on $H.$ Then the algebra of $n$ times weakly $D-$differentiable operators in  $\ca$ is denoted $C^n(\ca, D)$. 
\end{definition}    
 
 The self-adjoint operator $D$ defines a one parameter automorphism group $\a_t$ on $B(H),$ which for a bounded operator $x$ on $H$ is defined by $\a_t(x) := e^{itD}xe^{-itD}. $ For a weakly $D-$differentiable operator $x$ in $B(H)$ it then follows, that $\d_w(x)$ is  the weak operator derivative  $\frac{d}{dt} \a_t(x)|_{t=0}, $  but  
 there is also the possibility of having a norm derivative of $\a_t(x)$ at 0, and in that case we let $\d_u(x)$ denote that derivative. 
  On the other hand, when speaking of higher derivatives, we quote  
 from \cite{EC} the following result, which tells that higher uniform derivatives are closely related to weak derivatives.
 \begin{theorem}
 Let $x$ be a bounded operator on $H$ and $n \geq 2.$ If $x$ is $n$ times weakly $D-$differentiable then $x$ is $n-1$ times uniformly $D-$differentiable. 
 \end{theorem}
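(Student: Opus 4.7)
The plan is to build an operator-norm Taylor expansion of $\a_t(x)$ of order $n-1$ with remainder of order $|t|^n$, and then use it inductively to extract $(n-1)$-times uniform differentiability.

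The core step is an application of the fundamental theorem of calculus to the scalar function $s \mapsto \langle \a_s(z)\xi, \eta\rangle$ for any weakly $D$-differentiable bounded operator $z$:
\begin{equation*}
\langle (\a_t(z)-z)\xi,\eta\rangle \;=\; \int_0^t \langle \a_s(\d_w(z))\xi,\eta\rangle\, ds .
\end{equation*}
Each $\a_s$ being an isometric $*$-automorphism, the right hand side is bounded in modulus by $|t|\,\|\d_w(z)\|\,\|\xi\|\,\|\eta\|$, so $\|\a_t(z)-z\|\leq |t|\,\|\d_w(z)\|$. Assuming $x$ is $n$ times weakly $D$-differentiable, $\d_w^{k}(x)$ is weakly $D$-differentiable for every $k=0,\dots,n-1$, and iterating the displayed identity with $z$ ranging over $x, \d_w(x),\dots,\d_w^{n-1}(x)$ yields
\begin{equation*}
\a_t(x) \;=\; \sum_{k=0}^{n-1} \frac{t^k}{k!}\d_w^{k}(x) \;+\; \int_0^t\!\!\int_0^{s_1}\!\!\cdots\!\!\int_0^{s_{n-1}} \a_{s_n}\!\bigl(\d_w^{n}(x)\bigr)\, ds_n \cdots ds_1 ,
\end{equation*}
as an identity of sesquilinear forms. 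Invoking again the isometry $\|\a_s(\cdot)\|=\|\cdot\|$ together with the simplex volume $\tfrac{|t|^n}{n!}$ converts this into the operator-norm estimate
\begin{equation*}
\Bigl\|\a_t(x) - \sum_{k=0}^{n-1} \frac{t^k}{k!}\d_w^{k}(x)\Bigr\| \;\leq\; \frac{|t|^n}{n!}\,\|\d_w^{n}(x)\| .
\end{equation*}

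I would then conclude by induction on $n$. For $n=2$ the estimate reads $\|\a_t(x)-x-t\d_w(x)\| \leq \tfrac{t^2}{2}\|\d_w^2(x)\|$, which is precisely norm differentiability of $\a_t(x)$ at $t=0$, so $x$ is uniformly $D$-differentiable with $\d_u(x)=\d_w(x)$. For the inductive step, if $x$ is $n$ times weakly $D$-differentiable then, on the one hand, the $n=2$ case gives $\d_u(x)=\d_w(x)$; on the other hand, $\d_w(x)$ is $n-1$ times weakly $D$-differentiable, so by the induction hypothesis it is $n-2$ times uniformly $D$-differentiable. Together these say that $\d_u(x)$ exists and is $n-2$ times uniformly $D$-differentiable, i.e.\ $x$ is $n-1$ times uniformly $D$-differentiable.

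The main obstacle is purely bookkeeping for unbounded $D$: the repeated integrals must be read as weak integrals of sesquilinear forms, requiring continuity of $s\mapsto\langle\a_s(\d_w^{k}(x))\xi,\eta\rangle$ (from strong operator continuity of $\a_s$ on bounded sets, already established in \cite{EC}) and the verification that such a weak integral of a uniformly norm-bounded, weakly continuous family defines a bounded operator with the stated norm bound. Both points reduce to the uniform boundedness / duality argument recalled earlier in this section, so no technology beyond what is at hand is needed.
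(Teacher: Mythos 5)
Your argument is correct, and it is worth noting at the outset that the paper itself gives no in-text proof of this theorem: its entire proof is the single line ``See Corollary 4.2 of \cite{EC}.'' Your proposal therefore supplies exactly the content the paper outsources, and by the natural route: differentiate $s\mapsto\la\a_s(z)\xi,\eta\ra$ via Lemma \ref{der}, integrate up to a scalar Taylor formula, use that each $\a_s$ is an isometric automorphism to bound the remainder form by $\frac{|t|^n}{n!}\|\d_w^n(x)\|\,\|\xi\|\,\|\eta\|$, and take suprema over unit vectors to turn the form estimate into an operator-norm estimate; this is, in substance, how the cited Corollary 4.2 of \cite{EC} is obtained. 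Two remarks. First, the operator $\a_t(x)-\sum_{k=0}^{n-1}\frac{t^k}{k!}\d_w^k(x)$ is bounded from the start, so you never need the weak integral to \emph{produce} an operator --- you only bound an already existing sesquilinear form --- and the duality caveat in your final paragraph is therefore unnecessary; likewise only the $n=2$ instance $\|\a_t(x)-x-t\d_w(x)\|\le\frac{t^2}{2}\|\d_w^2(x)\|$ is actually used in your induction, the full order-$n$ expansion being correct but superfluous. Second, the one step you leave implicit deserves a sentence: uniform $D$-differentiability is defined at $t=0$ only, so to pass from ``$\d_u(x)$ exists and $\d_w(x)$ is $n-2$ times uniformly differentiable'' to ``$x$ is $n-1$ times uniformly differentiable'' you must first note that $\a_{t+h}(x)-\a_t(x)=\a_t\big(\a_h(x)-x\big)$ together with the isometry of $\a_t$ gives norm differentiability of $t\mapsto\a_t(x)$ at \emph{every} $t$, with derivative $\a_t(\d_u(x))$; this norm analogue of Lemma \ref{der} is what allows the uniform derivatives to be iterated through the group. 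With that one line added, your proof is complete.
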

 \begin{proof}
 See Corollary 4.2 of \cite{EC}. 
 \end{proof}
We will quote  Theorem 3.8 from \cite{EC} here,  without description of all the language used. Not all of the results below may be generalized to higher derivatives and for those properties, which can be extended, we will give the necessary precise definitions, when needed. Many of the results may be found in Section 2 of \cite{GGM}.

 \begin{theorem} \label{0}
Let $x$ be a bounded operator on $H.$ The following properties are equivalent:
\begin{itemize}
\item[(i)] $x$ is strongly $D-$differentiable.
\item[(ii)] $x$ is weakly $D-$differentiable.
\item[(iii)] $x$ is $D-$Lipschitz continuous.
\item[(iv)] The sesquilinear form $S(i[D,x])$ on the domain of $D$ is bounded.
\item[(v)]  The infinite matrix $m(i[D, x]) $ represents a bounded operator.
\item[(vi)] The operator $Dx - xD$ is defined and bounded on a core for $D.$
\item[(vii)] The operator $Dx - xD$ is bounded and its domain of definition is $\mathrm{dom}(D).$ 
\end{itemize}
If $x$ is weakly $D-$differentiable then  
\begin{align*} \forall \xi, \eta \in H \quad  \underset{t \to 0}{\lim} \frac{\langle (e^{itD}xe^{-itD} - x)\xi, \eta\rangle } {t} = & \langle \d_w(x) \xi, \eta\rangle  \\ x\,\mathrm{dom}D \subseteq \mathrm{dom}D  \text{ and }  \d_w(x)\big| \mathrm{dom}(D) = & i(Dx - xD) \\
\forall t \in \br: \quad \|\a_t(x) - x\| \leq & \|\d_w(x)\| |t|.
\end{align*}
\end{theorem}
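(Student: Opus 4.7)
The plan is to establish a cycle of implications, exploiting the easy directions first. Several are essentially formal: (i)$\Rightarrow$(ii) because norm convergence entails weak convergence; (vii)$\Rightarrow$(vi) because $\mathrm{dom}(D)$ is trivially a core for $D$; (iv)$\Leftrightarrow$(v) because the matrix $m(i[D,x])$ is just the sesquilinear form $S(i[D,x])$ expressed in a spectral basis for $D$; and (i)$\Rightarrow$(iii) follows from the general Banach-space fact that a curve differentiable at a point is locally Lipschitz there, combined with the cocycle identity $\a_t(x) - \a_s(x) = \a_s(\a_{t-s}(x) - x)$ and the isometry of each $\a_s$.

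The analytically substantive step is (ii)$\Rightarrow$(iii). Assuming (ii), for each pair $\xi, \eta$ the scalar function $t\mapsto\langle \a_t(x)\xi, \eta\rangle$ is differentiable at $0$, so its difference quotients are bounded in $t$. Two applications of the uniform boundedness principle---first fixing $\xi$ and varying $\eta$, then varying $\xi$---yield a constant $C$ with $\|(\a_t(x)-x)/t\|\le C$ for $0<|t|<1$, and the cocycle identity globalizes this to (iii). From (iii) the net $(\a_t(x)-x)/t$ is norm-bounded, so any weak-operator cluster point as $t\to 0$ lies in $B(H)$; weak differentiability pins the cluster point down uniquely, producing $\d_w(x)\in B(H)$ with $\|\d_w(x)\|\le C$.

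To link the derivative to the commutator, I pair the limit formula with $\xi,\eta\in\mathrm{dom}(D)$ and use Stone's theorem to differentiate $e^{itD}\xi$ and $e^{-itD}\eta$ in norm. Rearranging gives the form identity $\langle\d_w(x)\xi,\eta\rangle = i\langle x\xi, D\eta\rangle - i\langle Dx\xi, \eta\rangle$ on $\mathrm{dom}(D)\times\mathrm{dom}(D)$, whence $x\,\mathrm{dom}(D)\subseteq\mathrm{dom}(D)$ and $\d_w(x)\big|\mathrm{dom}(D) = i(Dx-xD)$. This delivers (iv) and (vii). For (vi)$\Rightarrow$(i), if $Dx-xD$ is bounded on a core then its closure $c$ is a bounded operator on $H$, and one checks the identity $\a_t(x)-x = \int_0^t \a_s(ic)\,ds$ first on vectors in the core and then by density, from which strong differentiability at $0$ and the norm bound $\|\a_t(x)-x\|\le\|ic\|\,|t|$ both fall out. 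The three displayed identities at the end of the statement are byproducts of these computations.

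The main obstacle will be the uniform-boundedness step in (ii)$\Rightarrow$(iii): one must pass from pointwise boundedness of scalar difference quotients (a mere consequence of differentiability at a single point) to their \emph{operator-norm} boundedness near $0$, and then globalize using the cocycle/isometry structure of $\a_t$. Everything downstream---extracting $\d_w(x)$ as a weak-operator limit, identifying it with $i(Dx-xD)$ on $\mathrm{dom}(D)$, and reconstructing $\a_t(x)-x$ by integration---is routine once this operator-norm estimate is in hand.
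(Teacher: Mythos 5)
Your identification of the analytically substantive step matches the paper exactly: this theorem is quoted from \cite{EC}, and the only proof detail the present paper supplies is that the crucial step is (ii)$\Rightarrow$(iii) via the uniform boundedness principle applied to $\{(e^{itD}xe^{-itD}-x)/t : t\neq 0\}$ --- precisely your double Banach--Steinhaus argument with the cocycle globalization. However, as a proof of a seven-fold equivalence your implication network does not close. You establish (i)$\Rightarrow$(ii)$\Rightarrow$(iii), (ii)$\Rightarrow$(iv)$\Leftrightarrow$(v), and (ii)$\Rightarrow$(vii)$\Rightarrow$(vi)$\Rightarrow$(i), so (i), (ii), (vi), (vii) form a genuine cycle; but (iii) and the pair (iv)/(v) occur only as \emph{conclusions}, never as hypotheses, so nothing shows that $D$-Lipschitz continuity or boundedness of the form/matrix implies differentiability. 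The missing links are not hard but must be supplied: for (iii)$\Rightarrow$(iv), note that for $\xi,\eta\in\mathrm{dom}(D)$ the quotient $t^{-1}\langle(\a_t(x)-x)\xi,\eta\rangle = t^{-1}\big(\langle xe^{-itD}\xi,e^{-itD}\eta\rangle - \langle x\xi,\eta\rangle\big)$ converges by Stone's theorem to $i\langle x\xi,D\eta\rangle - i\langle xD\xi,\eta\rangle$, and the Lipschitz bound caps this form by $C\|\xi\|\|\eta\|$; for (iv)$\Rightarrow$(vii), the bounded form yields $b\in B(H)$ with $\langle b\xi,\eta\rangle = i\langle x\xi,D\eta\rangle - i\langle xD\xi,\eta\rangle$, and self-adjointness of $D$ then forces $x\xi\in\mathrm{dom}(D^*)=\mathrm{dom}(D)$ with $Dx-xD=-ib$ on $\mathrm{dom}(D)$. (Your displayed form identity writes $\langle Dx\xi,\eta\rangle$ prematurely --- that expression only makes sense after this duality argument shows $x\xi\in\mathrm{dom}(D)$.)

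There is also a hidden hole in your (vi)$\Rightarrow$(i). To verify $\a_t(x)\xi - x\xi = \int_0^t\a_s(ic)\xi\,ds$ for $\xi$ in the core $\cf$, you must differentiate $s\mapsto\langle xe^{-isD}\xi,e^{-isD}\eta\rangle$, whose derivative is the commutator form evaluated at $(e^{-isD}\xi,\,e^{-isD}\eta)$. But $e^{-isD}$ need not leave $\cf$ invariant, and a priori you know $Dx-xD$ only on $\cf$; in particular you cannot assert $xe^{-isD}\xi\in\mathrm{dom}(D)$, so neither the strong nor the weak differentiation goes through on $\cf$ alone, and the ``density'' you invoke at the end is plain norm density, which does not repair this. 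What is needed is to first extend the bound on the form from $\cf\times\cf$ to $\mathrm{dom}(D)\times\mathrm{dom}(D)$, using that a core is dense in $\mathrm{dom}(D)$ in the graph norm and that the form $(\xi,\eta)\mapsto i\langle x\xi,D\eta\rangle - i\langle xD\xi,\eta\rangle$ is graph-norm continuous --- in effect proving (vi)$\Rightarrow$(iv) first and then routing through the links above. Once that extension is in place, your integral representation, the strong differentiability at $0$, and the estimate $\|\a_t(x)-x\|\leq\|\d_w(x)\||t|$ all follow as you describe.
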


The properties (iii) and (iv) from the theorem just above have no simple generalizations to higher derivatives and will not be discussed here at all. The remaining five properties all suggest natural  extensions to the setting of higher  weak derivatives and higher commutators as well, and we will discuss this in the next section. 

Before embarking into the study of  higher weak derivatives we would like to make the following observation explicit. The reason being, that although most people know it, we do not have an exact reference at hand.  
\begin{lemma}
\label{der}
 If a bounded operator $x$ on $H$ is weakly $D-$differentiable then for any pair of vectors $\xi, \eta$ in $H$ 
the function $\langle\a_t(x) \xi, \eta\rangle $ is differentiable on $\br$ and $$\frac{d}{dt}\langle \a_t(x) \xi, \eta\rangle = \langle \a_t(\d_w(x)) \xi, \eta\rangle.$$
\end{lemma}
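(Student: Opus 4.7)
The plan is to reduce differentiability at an arbitrary point $t_0 \in \br$ to the hypothesis of differentiability at $0$, exploiting that $\a_t$ is a one-parameter group and that $e^{isD}$ is unitary.

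Fix $t_0 \in \br$ and $\xi, \eta \in H$. First I would write the difference quotient at $t_0$ using the group law $\a_{t_0+h} = \a_{t_0}\circ \a_h$:
\begin{align*}
\langle \a_{t_0+h}(x)\xi,\eta\rangle - \langle \a_{t_0}(x)\xi,\eta\rangle
&= \langle e^{it_0D}\a_h(x)e^{-it_0D}\xi,\eta\rangle - \langle e^{it_0D}xe^{-it_0D}\xi,\eta\rangle\\
&= \langle (\a_h(x)-x)\,\xi',\eta'\rangle,
\end{align*}
where $\xi' := e^{-it_0D}\xi$ and $\eta' := e^{-it_0D}\eta$. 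Here I have used that $e^{-it_0D}$ is the Hilbert space adjoint of $e^{it_0D}$ to move the unitaries across the inner product.

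Dividing by $h$ and invoking weak $D$-differentiability of $x$ at $t=0$ with the vectors $\xi',\eta'$ (which is exactly the content of Definition 2.1 together with Theorem \ref{0}), the limit as $h \to 0$ exists and equals $\langle \d_w(x)\xi',\eta'\rangle$. Reinserting the unitaries,
\begin{align*}
\langle \d_w(x)\xi',\eta'\rangle
= \langle \d_w(x) e^{-it_0D}\xi, e^{-it_0D}\eta\rangle
= \langle e^{it_0D}\d_w(x)e^{-it_0D}\xi,\eta\rangle
= \langle \a_{t_0}(\d_w(x))\xi,\eta\rangle.
\end{align*}
This gives both existence of the derivative at every $t_0$ and the claimed formula.

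There is no real obstacle — the argument is a direct translation trick using the group property, and the only substantive input is that $\d_w(x)$ is already known to exist as a bounded operator on $H$ by Theorem \ref{0}. The one point worth flagging is that the derivative is computed as a two-sided limit, but this causes no trouble since the difference quotient formula above is valid for both positive and negative $h$.
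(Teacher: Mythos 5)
Your proof is correct and is essentially the paper's own argument written out in full: the paper simply asserts that the identity, known at $t=0$ by definition of $\d_w(x)$, ``may be translated'' to arbitrary $t$ by the arguments of Lemma 2.1 of \cite{EC}, and your use of the group law $\a_{t_0+h}=\a_{t_0}\circ\a_h$ together with unitarity of $e^{it_0D}$ to move the increment onto the vectors $\xi'=e^{-it_0D}\xi$, $\eta'=e^{-it_0D}\eta$ is exactly that translation trick. No gap; the only substantive input, the existence of the bounded weak derivative $\d_w(x)$ valid for all pairs of vectors, is correctly sourced from Theorem \ref{0}.
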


\begin{proof} 
By definition the equality holds for 
$t=0,$ and arguments similar to the ones given in the proof of Lemma 2.1 of \cite{EC} show that the identity may be translated from $t =0$ to any other real $t.$
\end{proof}

This lemma has an immediate consequence, which we formulate as a proposition, since it is important, although its proof is trivial.

\begin{prop} \label{highder}
A bounded operator $x$ on $H$ is $n$ times weakly $D-$dif-ferentiable if and only if  
$x$ is in $\mathrm{dom}(\d_w^n)$ and if and only if for any pair $\xi, \eta $ in $H$ the function $\langle\a_t(x)\xi, \eta\rangle $ is in $C^n(\br).$ 

If $ x$ is $n$ times weakly differentiable then 
$$ \frac{d^n}{dt^n} \langle \a_t(x)\xi, \eta\rangle = \langle \a_t( \d_w^n(x) )\xi, \eta\rangle.$$
\end{prop}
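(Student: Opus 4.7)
The plan is a straightforward induction on $n$, with Lemma~\ref{der} as the only real tool; all three equivalences and the formula fall out together.

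For $n=1$ the equivalence of weak $D$-differentiability with $x \in \mathrm{dom}(\d_w)$ is simply the definition of $\d_w$, and Lemma~\ref{der} yields the identity $\frac{d}{dt}\langle \a_t(x)\xi, \eta\rangle = \langle \a_t(\d_w(x))\xi, \eta\rangle$ on all of $\br$. Rewriting the right-hand side as $\langle \d_w(x)e^{-itD}\xi, e^{-itD}\eta\rangle$ and using strong continuity of $t \mapsto e^{-itD}$ exhibits it as continuous in $t$, which upgrades ``differentiable'' to ``$C^1$''.

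For the inductive step, suppose the proposition holds for $n-1$, and that $x$ is $n$ times weakly $D$-differentiable. Then in particular $x$ is $(n-1)$ times weakly $D$-differentiable, so by induction $x \in \mathrm{dom}(\d_w^{n-1})$ and
$$\frac{d^{n-1}}{dt^{n-1}}\langle \a_t(x)\xi, \eta\rangle = \langle \a_t(\d_w^{n-1}(x))\xi, \eta\rangle$$
on $\br$. The existence of one more derivative at $t=0$ then reads exactly as weak $D$-differentiability of $\d_w^{n-1}(x)$, i.e.\ $x \in \mathrm{dom}(\d_w^n)$; applying Lemma~\ref{der} to $\d_w^{n-1}(x)$ both propagates this derivative to all of $\br$ and delivers the stated formula, whose $C^n$-regularity follows as in the base case by strong continuity of $e^{\pm itD}$. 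The reverse implication $x \in \mathrm{dom}(\d_w^n) \Rightarrow n$-fold differentiability is the same iteration of Lemma~\ref{der}.

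No obstacle of substance arises; the only point worth keeping an eye on is the passage from ``differentiable at $t=0$'' in the hypothesis to ``differentiable on $\br$'' in the conclusion, but this is precisely what Lemma~\ref{der} automates at each rung of the induction, since at the $k$-th stage the existence of an additional derivative at the single point $0$ is, by the inductive formula, exactly the statement that $\d_w^{k-1}(x)$ itself lies in $\mathrm{dom}(\d_w)$.
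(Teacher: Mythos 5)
Your proposal is correct and takes essentially the same route as the paper, whose entire proof reads ``Follows from Lemma \ref{der} by induction'': your induction on $n$, using Lemma \ref{der} at each stage to convert differentiability at $t=0$ into differentiability on all of $\br$ together with the derivative formula, is exactly the argument the paper leaves implicit. The one detail you add beyond the paper's intent --- invoking strong continuity of $t \mapsto e^{\pm itD}$ to upgrade $n$-fold differentiability to membership in $C^n(\br)$ --- is a genuine and correctly handled point, since the proposition asserts $C^n$ regularity rather than mere existence of derivatives.
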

\begin{proof}
Follows from Lemma \ref{der} by induction.
\end{proof}

We will end this section by introducing a norm on $C^n(D,H).$
 \begin{definition} For any $x $ in $C^n(D,H)$ the norm $\|x\|_n $ is defined by
 $$\|x\|_n = \sum_{j=0}^n \frac{1}{j!}\|\d_w^j(x)\|.$$
 \end{definition}
 
 This is not the same norm as the one defined in \cite{AMG} Definition 5.1.1 at page 195, but it is equivalent to that norm,  and it follows from  \cite{EC} Proposition 3.10 that $\big( C^n(\ca, D), \|.\|_n\,\big)$ is a Banach algebra. 
   
\section{ Higher weak derivatives and iterated commutators.}
Having Proposition \ref{highder} one might think that our understanding of $\d_w$ and its powers is sufficiently well established for most purposes, but it is not. The problem is that we do not know how to relate higher weak derivatives to expressions involving iterated commutators with $iD$. If $x$ is in dom$(\d_w)$ then it follows from Theorem \ref{0} that $iDx - ixD$ is defined on all of  dom$(D)$ and   $\d_w(x)$ is the closure of  $iDx - ixD. $ If $x$ is in dom$(\d_w^2),$ then it is natural  to look at the second $iD$  commutator  $$iD(iDx - ixD) -   (iDx - ixD)(iD),$$ but we know nothing about its domain of definition, possible boundedness and closure. In this section we will show that the properties of the higher commutators are as nice as we can possibly hope for. 
We will show that for a bounded   $n$ times weakly differentiable operator $x,$ the  $n$ times iterated commutator between $iD$ and $x$ is defined on dom$(D^n)$ and the closure of this operator equals $\d_w^n(x).$ We will base the proof of this on the results of  Theorem \ref{0}. In order to simplify the writings below we define an operator $d$ on the space of linear operators on $H.$

\begin{definition}
\begin{itemize}
\item[]
\item[(i)]
A linear operator on $ H$ is a linear operator defined on a subspace of $H$ and with values in $H.$ The space of all linear operators on $H$ is denoted $\cl.$ A product $yz$ of operators in $\cl$ is defined on those vectors in the domain of $z$ which are mapped into the domain of $y$ by $z,$ and a sum is defined on the intersection of the domains of all the summands.  
\item[(ii)] The operator $d$ on $ \cl$ is defined for $y$ in $\cl$ by $d(y) := (iD)y - y(iD).$ 
\end{itemize}
\end{definition}
 We will start our investigation on higher commutators by  making the following observation. 

 \begin{lemma} \label{dn}
 Let $x$ be a bounded operator in $B(H)$ and $n$ a natural number. If $x$ is $n$ times weakly differentiable then  for any $k$ in $\{1, \dots , n\}:$ \begin{align*}
  & \d_w^{k-1}(x) : \mathrm{dom}(D) \to \mathrm{dom}(D), \\
  &\d_w^k (x) | \mathrm{dom}(D) = i[D, \d_w^{k-1}(x)] = d(\d_w^{k-1}(x))
\end{align*}
 \end{lemma}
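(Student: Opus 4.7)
The plan is to reduce the statement for a general index $k$ to the once-differentiable case, which is already handled by Theorem \ref{0}. The key observation is that each intermediate derivative $\d_w^{k-1}(x)$ is itself bounded and at least once weakly $D$-differentiable, so we may apply the single-step result to it.

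First, I would record that by Proposition \ref{highder}, saying $x$ is $n$ times weakly $D$-differentiable is equivalent to $x \in \mathrm{dom}(\d_w^n)$. In particular, for every $k \in \{1,\dots,n\}$, the bounded operator $y_k := \d_w^{k-1}(x)$ belongs to $\mathrm{dom}(\d_w^{n-k+1})$, and since $n-k+1 \geq 1$, this forces $y_k \in \mathrm{dom}(\d_w)$; that is, $y_k$ is weakly $D$-differentiable with $\d_w(y_k) = \d_w^k(x)$.

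Next, I would apply Theorem \ref{0} to the bounded, weakly $D$-differentiable operator $y_k$. The implication (ii) $\Rightarrow$ (vii), together with the final display of that theorem, states that
\[
 y_k\,\mathrm{dom}(D) \subseteq \mathrm{dom}(D), \qquad \d_w(y_k)\bigl|\mathrm{dom}(D) = i(D y_k - y_k D).
\]
Rewriting in terms of $x$ and using the definition of $d$, this is exactly
\[
 \d_w^{k-1}(x)\,\mathrm{dom}(D) \subseteq \mathrm{dom}(D), \qquad \d_w^k(x)\bigl|\mathrm{dom}(D) = d\bigl(\d_w^{k-1}(x)\bigr),
\]
which is the desired conclusion. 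Running $k$ through $\{1,\dots,n\}$ completes the proof.

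There is no genuine obstacle once Theorem \ref{0} is in hand; the argument is essentially a bookkeeping step that needs only the fact that $\d_w^{n-k+1}$ being defined on $x$ implies $\d_w$ is defined on $\d_w^{k-1}(x)$. The only subtlety to check is that $\d_w^{k-1}(x)$ is bounded, so that Theorem \ref{0} genuinely applies; this is automatic because $\mathrm{dom}(\d_w^j) \subseteq B(H)$ by construction, since $\d_w$ is defined only on bounded operators and takes bounded values.
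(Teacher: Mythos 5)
Your proposal is correct and follows essentially the same route as the paper: you observe that $\d_w^{k-1}(x)$ lies in $\mathrm{dom}(\d_w)$ and then invoke Theorem \ref{0} (in particular item (vii) and the final display) to obtain the domain invariance and the commutator identity. The paper's own proof is exactly this two-line reduction; your extra remarks on boundedness and the bookkeeping via $\mathrm{dom}(\d_w^{n-k+1})$ only make explicit what the paper leaves implicit.
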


\begin{proof}
If $x$ is $n$ times weakly differentiable, then  for any $k$ in $\{1, \dots , n\} $ we have $\d_w^{k-1}(x)$ is in dom$(\d_w).$ Then Theorem \ref{0} item (vii) presents the claimed properties of $\d_w^{k-1}(x).$ 

\end{proof}

The  statements in Lemma \ref{dn} show that $\d_w^k(x)$ is the closure of the commutator $[iD, \d_w^{k-1}(x)], $ but if $k>1$ then $\d_w^{k-1}(x) $ is defined as a closure of the commutator $[iD, \d_w^{k-2}(x)], $ so we have no direct control over the operator $[iD, \d_w^{k-1}(x)]. $ This is not sufficient for our purpose, so we want to look at the restriction of  such a commutator to dom$(D^k),$ and then show that on this domain the higher weak derivative   may be computed without any closure operations, as a higher commutator, and that the closure of this algebraically defined commutator equals $\d_w^k(x).$ 
\begin{prop} \label{dcom}
Let $x$ be an $n$ times weakly differentiable bounded operator on $H,$ then for $k$ in $\{1, \dots, n\}$ 
$$\begin{matrix}
(i) &\d_w^{k-1}(x)\mathrm{dom}(D) &\subseteq &\mathrm{dom}(D)\\
(ii) & x\, \mathrm{dom}(D^k) &\subseteq &\mathrm{dom}(D^k), \\
(iii) & \mathrm{dom}(d^k(x)) & = &\mathrm{dom}(D^k) \\  
(iv) &  \d_w^{k}(x)|\mathrm{dom}(D^k) & = & d^k(x) \\ 
(v) & \d_w^{k}(x) & =  & \mathrm{closure}(d^k(x)) . 
\end{matrix}$$ 
\end{prop}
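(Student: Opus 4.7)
The plan is to induct on $k$, proving (i)--(v) simultaneously for all $k \in \{1,\dots,n\}$ and for any bounded $n$-times weakly $D$-differentiable operator in place of $x$. The base case $k=1$ is essentially Theorem \ref{0}: item (vii) gives (i) and (ii); the definition $d(x)=iDx-x(iD)$ combined with (i) makes $\mathrm{dom}(d(x))=\mathrm{dom}(D)$, which is (iii); the last display of Theorem \ref{0} is (iv); and (v) follows because $\d_w(x)$ is bounded and $\mathrm{dom}(D)$ is dense. At stage $k\geq 2$ the inductive hypothesis will be applied both to $x$ itself and to $\d_w(x)$, since both lie in $\mathrm{dom}(\d_w^{k-1})$ once $x\in\mathrm{dom}(\d_w^k)$.

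\textbf{Easy parts of the inductive step.} Property (i) at level $k$ is Theorem \ref{0}(vii) applied to the weakly differentiable operator $\d_w^{k-1}(x)$. For (iii), unfolding $d^k(x)=d(d^{k-1}(x))$ and using the inductive identities $\mathrm{dom}(d^{k-1}(x))=\mathrm{dom}(D^{k-1})$ and $d^{k-1}(x)\big|\mathrm{dom}(D^{k-1})=\d_w^{k-1}(x)$, one sees that $\xi\in\mathrm{dom}(d^k(x))$ iff $\xi\in\mathrm{dom}(D^{k-1})$, $D\xi\in\mathrm{dom}(D^{k-1})$, and $\d_w^{k-1}(x)\xi\in\mathrm{dom}(D)$; the second condition is exactly $\xi\in\mathrm{dom}(D^k)$, and the third is then automatic from (i). For (iv), on $\mathrm{dom}(D^k)$ the expression $d^k(x)\xi=iD\,d^{k-1}(x)\xi-d^{k-1}(x)(iD\xi)$ reduces, via the inductive (iv) applied both at $\xi$ and at $D\xi$, to $iD\,\d_w^{k-1}(x)\xi-\d_w^{k-1}(x)(iD\xi)=d(\d_w^{k-1}(x))\xi$, and this equals $\d_w^k(x)\xi$ by Lemma \ref{dn}. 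Property (v) is then a density argument: $\mathrm{dom}(D^k)$ is dense in $H$ by the spectral theorem for the self-adjoint $D$, and $\d_w^k(x)$ is a bounded extension of $d^k(x)$, so $\d_w^k(x)=\overline{d^k(x)}$.

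\textbf{The main obstacle: property (ii).} The delicate step is showing $x\,\mathrm{dom}(D^k)\subseteq\mathrm{dom}(D^k)$. For $\xi\in\mathrm{dom}(D^k)\subseteq\mathrm{dom}(D)$, the identity $Dx\xi=xD\xi-i\,\d_w(x)\xi$ coming from Theorem \ref{0} is the starting point; the task is to show that the right-hand side lies in $\mathrm{dom}(D^{k-1})$, since that is equivalent to $x\xi\in\mathrm{dom}(D^k)$. For the first summand, $D\xi\in\mathrm{dom}(D^{k-1})$, so the inductive (ii) applied to $x$ at level $k-1$ gives $xD\xi\in\mathrm{dom}(D^{k-1})$. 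For the second summand, I apply the inductive (ii) to the operator $\d_w(x)$ (also $n-1\geq k-1$ times weakly differentiable) at level $k-1$, which yields $\d_w(x)\,\mathrm{dom}(D^{k-1})\subseteq\mathrm{dom}(D^{k-1})$ and hence $\d_w(x)\xi\in\mathrm{dom}(D^{k-1})$. Their combination lies in $\mathrm{dom}(D^{k-1})$, closing the induction. This is precisely the step where the statement must be available not only for $x$ but for arbitrary $n$-times weakly differentiable operators, so that the inductive hypothesis can be invoked for $\d_w(x)$; without that, there is no direct control of $\d_w(x)$ on the higher domains $\mathrm{dom}(D^j)$.
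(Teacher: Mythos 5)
Your proof is correct, and on items (i), (iii), (iv) and (v) it runs along essentially the same lines as the paper's: (i) comes from Theorem \ref{0}(vii) (equivalently Lemma \ref{dn}) applied to the weakly differentiable operator $\d_w^{k-1}(x)$; (iii) and (iv) by unfolding $d^k(x)=d(d^{k-1}(x))$ against the inductive identities; and (v) by a density argument (the paper invokes the stronger fact that $\mathrm{dom}(D^k)$ is a core for $D$, but since $\d_w^k(x)$ is bounded, density suffices exactly as you argue). The genuine divergence is in item (ii), which is also where the paper locates the real work. The paper keeps the induction about the single fixed operator $x$ and instead expands $d^{k-1}(x)\xi$ by the binomial formula, as in equation (\ref{Dk}): for $\xi\in\mathrm{dom}(D^k)$ and $j>0$ each term $D^{k-1-j}xD^j\xi$ lies in $\mathrm{dom}(D)$ by the inductive (ii) at level $k-j\leq k-1$, and since $d^{k-1}(x)\xi\in\mathrm{dom}(D)$ by (i) and (iv), the remaining $j=0$ term $D^{k-1}x\xi$ is exhibited as a difference of vectors in $\mathrm{dom}(D)$, giving $x\xi\in\mathrm{dom}(D^k)$. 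You avoid that expansion entirely by strengthening the induction hypothesis to quantify over all operators that are at least $k$ times weakly differentiable, so that it can be invoked for $\d_w(x)$ (which is $n-1\geq k-1$ times weakly differentiable), and then you need only the first-order identity $Dx\xi = xD\xi - i\,\d_w(x)\xi$ from Theorem \ref{0}. Both arguments are valid and the induction in each is well founded. Your route buys a purely first-order argument with no combinatorial bookkeeping, at the price of the stronger, operator-quantified hypothesis — a requirement you correctly flag as essential, since without it there is no control of $\d_w(x)$ on $\mathrm{dom}(D^{k-1})$; the paper's route keeps the induction about $x$ alone at the cost of the explicit Leibniz-type expansion, which it profits from anyway, since formula (\ref{Dk}) reappears in the matrix computations of Section 4.
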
 

\begin{proof}
The item (i) follows from Lemma  \ref{dn}. The following four items are related and we show them by induction on $k.$ For $k = 1 $ the results follow again from item (iv) of Theorem \ref{0}. Then suppose $ 1 < k \leq n$ and that the statements are true for natural numbers in the set $\{1, \dots, k-1\}. $ We start by proving (iii), so we will choose a vector $\xi$ in dom$(D^k),$
then $\xi $ is in dom$(D^{k-1})$ so $d^{k-1}(x)\xi = \d_w^{k-1}(x)\xi $  and by item (i) $d^{k-1}(x)\xi $ is in dom$(D),$ and finally $\xi$ is in dom$(iDd^{k-1}(x)).$ By assumptions $(iD)\xi$ is in dom$(D^{k-1})$ which equals dom$(d^{k-1}(x)) $ so $\xi $ is in  dom$(d^{k-1}(x)(iD))$ too, and dom$(D^k) \subseteq \mathrm{dom}(d^k(x)).$ The opposite inclusion is trivially true since  $d^k(x)$ is a sum of terms, where the last summand is $(-i)^kxD^k.$  

With respect to item (iv), note  that
\begin{displaymath} D\mathrm{dom}(D^k) \subseteq  \mathrm{dom}(D^{k-1}) \subseteq  \mathrm{dom}(D ),
\end{displaymath}
so by the induction hypotheses the domain  for 
$d^{k-1}(x)D$ equals dom$(D^k)$ and $d^{k-1}(x)D = \d^{k-1}_wD\big|\mathrm{dom}(D^k). $  By (i)  and the induction hypotheses   $D\d_w^{k-1}(x)$ is defined on dom$(D^k)$ and equals  $Dd^{k-1}(x)$ on that domain. Hence item  (iv) follows. 

With respect to (v) we remark, that dom$(D^k)$ is a core for $D$ since it contains the vectors in the core $\ce,$ which was introduced in the proof of (v) $ \Rightarrow$ (vi) in Theorem 3.8 of \cite{EC}. Then $\d_w^k(x) $ is the closure of the commutator $d(\d_w^{k-1}(x))| \mathrm{dom}(D^k),$ but the latter equals $d^k(x)$ so (v) follows.

To prove (ii) we remark, that from (i) and (iv) it follows that 
$$d^{k-1}(x) \mathrm{dom}(D^k) \subseteq \mathrm{dom}(D).$$
On the other hand a closer examination of the expression $d^{k-1}(x)\xi$ for a vector $\xi $ in dom$(D^k)$ shows that 
\begin{equation}
d^{k-1}(x)\xi = (i)^{k-1} \sum_{j=0}^{k-1} \binom{k-1}{j}(-1)^jD^{k-1-j}xD^j\xi \label{Dk} 
\end{equation}
For $j > 0 $ we have $D^j\xi$ is in dom$(D^{k-j})$ and by assumption $xD^j\xi$ is in dom$(D^{k-j})$ so $D^{k-1-j}xD^j\xi$ is in dom$(D).$ Then for $j = 0$ we find that $D^{k-1}x \xi$  may be written as a difference of two vectors in dom$(D)$ and hence $x\xi$ is a vector in dom$(D^k),$ and item (ii) is  proven.  
\end{proof}

\section{Equivalent Properties}

In analogy with the results of Theorem \ref{0} we want to show that higher order weak differentiability may be characterized in several different ways. Some of the properties we find are expressed in terms of infinite matrices of operators, so  we will include a  short description of this set-up here.  

In \cite{EC}  we defined a sequence of pairwise orthogonal projections with sum $I$ in $B(H)$ by letting $e_n$ denote the spectral projection for $D$ corresponding to the interval $]n-1, n].$ Then we defined $\cam$ to be all matrices $(y_{rc})$ with $r$  and $c$  integers and $y_{rc}$ an operator in $e_rB(H)e_c. $  Any bounded operator $x$ on $H$ induces an element $m(x)$ in $\cam$ which is defined as $m(x)_{rc} := e_rxe_c.$
 The operator $D$ has a representation $m(D)$  in $\cam$ too, and it is defined as a diagonal matrix $m(D)_{rc} = 0, $ if $r \neq c$ and diagonal elements $d_r := m(D)_{rr}  := De_r.$ Then for any element  $ y = (y_{rc})$ in $\cam,$ the commutator $i[m(D),y]$  makes sense in $\cam$  by $$ i[m(D),y]_{rc} := i( d_r y_{rc}  - y_{rc}d_c ),$$ and we may define a linear mapping $d_{\cam}: \cam \to \cam$ by $$\forall y =(y_{rc}) \in \cam: \, \, d_{\cam}(y)_{rc} := i d_ry_{rc} - i y_{rc}d_c.$$
By the computations above we get that the powers $d_{\cam}^n$ are given as  
\begin{equation} \label{mdn} 
\forall n \in  \bn \, \forall y = (y_{rc}) \in \cam: \quad  
d_{\cam}^n(y)_{rc} = i^n\sum_{k=0}^n \binom{n}{k}(-1)^{n-k} d  _r^k y_{rc}d_c^{n-k}.
\end{equation}

We can now formulate our result on characterizations of higher weak differentiability.

 \begin{theorem} \label{I}
Let $x$ be a bounded operator on $H$ and $n$ a natural number.
The following properties are equivalent:
\begin{itemize}
\item[(i)] $x$ is in $\mathrm{dom}(\d_w^n).$ 
\item[(ii)] $x$ is $n$ times  weakly $D-$differentiable.
\item[(iii)] $x$ is $n$ times strongly $D-$differentiable.
\item[(iv)] $\forall k \in \{1, \dots, n\}$ \begin{align*}
& x: \mathrm{dom}(D^k) \to \mathrm{dom}(D^k) \\
& d^k (x) \text{ is defined and bounded   on } \mathrm{dom}(D^k) \text{ with closure } \d_w^k(x).
\end{align*} 
\item[(v)]  For $k $ in $\{1, \dots, n\}$ the infinite matrix $d_{\cam}^k(m(x))) $ represents a bounded operator.
\item[(vi)] There exists a core $\cf$ for $D$ such that for any $k$ in $\{1, \dots , n\}$  the operator $d^k(x)$ is defined and bounded on $\cf.$
\end{itemize}
\end{theorem}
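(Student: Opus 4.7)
I would organize the implications around the cycle (i) $\Leftrightarrow$ (ii) $\Rightarrow$ (iv) $\Rightarrow$ (vi) $\Rightarrow$ (ii), complemented by the separate equivalences (ii) $\Leftrightarrow$ (iii) and (v) $\Leftrightarrow$ (iv). The equivalence (i) $\Leftrightarrow$ (ii) is just Proposition \ref{highder}, and the implication (ii) $\Rightarrow$ (iv) is precisely the content of Proposition \ref{dcom}. For (iv) $\Rightarrow$ (vi) the choice $\cf = \mathrm{dom}(D^n)$ works: it lies in $\mathrm{dom}(D^k)$ for every $k \le n$, contains the bounded-spectral-subspace core $\bigoplus_r e_r H$, and hence is itself a core for $D$.

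The substantive implication is (vi) $\Rightarrow$ (ii), which I would prove by induction on $n$. The base case $n=1$ is item (vi) of Theorem \ref{0}. For the inductive step, the hypothesis applied to $k \le n-1$ gives by induction that $x$ is $(n-1)$ times weakly $D$-differentiable, so Proposition \ref{dcom} yields $\d_w^{n-1}(x)\big|\mathrm{dom}(D^{n-1}) = d^{n-1}(x)$. The statement (vi) implicitly forces $\cf \subseteq \mathrm{dom}(D^n)$, and for $\xi \in \cf$ one checks algebraically, using that $D\xi \in \mathrm{dom}(D^{n-1})$ as well, that $iD\d_w^{n-1}(x)\xi - i\d_w^{n-1}(x)D\xi = d^n(x)\xi$. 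Hence the commutator $d(\d_w^{n-1}(x))$ is bounded on the core $\cf$ for $D$, and Theorem \ref{0}(vi) applied to $\d_w^{n-1}(x)$ yields $\d_w^{n-1}(x) \in \mathrm{dom}(\d_w)$, i.e.~$x \in \mathrm{dom}(\d_w^n)$.

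For (ii) $\Leftrightarrow$ (iii), strong trivially implies weak. Conversely, the theorem in Section 2 saying that $n$-times weak differentiability implies $(n-1)$-times uniform differentiability, together with Proposition \ref{highder}, shows that $t\mapsto \a_t(x)\xi$ has $n-1$ norm-derivatives with $(n-1)$-th value $\a_t(\d_w^{n-1}(x))\xi$. Since $\d_w^{n-1}(x)$ is weakly $D$-differentiable, it is strongly $D$-differentiable by the $n=1$ case of Theorem \ref{0}, and standard difference-quotient bookkeeping produces the $n$-th strong derivative of $t\mapsto \a_t(x)\xi$.

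Finally, (v) $\Leftrightarrow$ (iv) follows from identifying the $(r,c)$-entry $d_\cam^k(m(x))_{rc}$ with $e_r d^k(x) e_c$ via formula \eqref{mdn} and the fact that each $e_c H$ lies in $\mathrm{dom}(D^\infty)$: the algebraic direct sum $\cf_0 := \bigoplus_c e_c H$ is a dense core for $D$ contained in every $\mathrm{dom}(D^k)$, and $d_\cam^k(m(x))$ represents a bounded operator precisely when $d^k(x)$ is bounded on $\cf_0$. The main obstacle in the whole argument is the inductive step of (vi) $\Rightarrow$ (ii): one must pinpoint that the commutator of $iD$ with the \emph{closure} $\d_w^{n-1}(x)$ agrees with the algebraically defined $d^n(x)$ on $\cf$, and this is precisely where the explicit identification supplied by Proposition \ref{dcom} is essential.
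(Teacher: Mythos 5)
Your overall architecture is sound and, apart from one point, tracks the paper closely: (i) $\Leftrightarrow$ (ii) via Proposition \ref{highder} and (ii) $\Rightarrow$ (iv) via Proposition \ref{dcom} are exactly the paper's steps, and (iv) $\Rightarrow$ (vi) with $\cf = \mathrm{dom}(D^n)$ is the paper's remark verbatim. Your (vi) $\Rightarrow$ (ii) is a correct and slightly cleaner variant: where the paper approximates $\xi$ by a sequence $\xi_m$ with $\xi_m \to \xi$, $D\xi_m \to D\xi$ and invokes closedness of $D$ to conclude $\d_w^{k-1}(x)\xi \in \mathrm{dom}(D)$, you read the same membership directly off the paper's domain conventions for sums and products: definedness of $d^n(x)$ at $\xi \in \cf$ already forces $d^{n-1}(x)\xi \in \mathrm{dom}(D)$ and $\cf \subseteq \mathrm{dom}(D^n)$, after which Theorem \ref{0}(vi) applies to $\d_w^{n-1}(x)$ on the core $\cf$. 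Your route to (ii) $\Rightarrow$ (iii) through the quoted result that $n$ times weak differentiability gives $(n-1)$ times uniform differentiability (Corollary 4.2 of \cite{EC}) is valid but heavier than needed; the paper just iterates the $n=1$ equivalence of Theorem \ref{0} inductively.

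The genuine gap is in your handling of (v), specifically the direction (v) $\Rightarrow$ (iv). You claim $d_{\cam}^k(m(x))$ represents a bounded operator \emph{precisely when} $d^k(x)$ is bounded on $\cf_0 = \bigoplus_c e_cH$, via the identification $d_{\cam}^k(m(x))_{rc} = e_r d^k(x) e_c$. But under hypothesis (v) alone it is not known that $d^k(x)$ is even \emph{defined} on $\cf_0$: the fact that $e_cH \subseteq \mathrm{dom}(D^\infty)$ gives meaning to $D^j e_c\xi$, but already $d(x)e_c\xi$ requires $xe_c\xi \in \mathrm{dom}(D)$, and $d^k(x)e_c\xi$ requires invariance properties such as $xD^j\xi \in \mathrm{dom}(D^{k-j})$ --- none of which is available before differentiability of $x$ is established. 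The matrix entries $d_{\cam}^k(m(x))_{rc}$ are defined purely algebraically from the bounded blocks $d_r^j\, e_rxe_c\, d_c^{k-j}$, so the identification with $e_r d^k(x)e_c$ is licensed only once (ii)/(iv) are known; using it in the reverse direction begs the question. This is exactly where the analytic content of (v) $\Rightarrow$ (ii) lives: one must first deduce from boundedness of the matrix that $xe_cH \subseteq \mathrm{dom}(D)$ --- the step the paper imports from \cite{EC} --- and then bootstrap inductively: $d_{\cam}(m(x))$ bounded gives $x \in \mathrm{dom}(\d_w)$ with $\d_w(x) = z_1$ by Theorem \ref{0}(v); then $e_r\d_w(x)e_c = d_{\cam}(m(x))_{rc}$, so $d_{\cam}^2(m(x)) = m(i[D,\d_w(x)])$ and Theorem \ref{0}(v) applies to $\d_w(x)$, and so on, which is the paper's proof. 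With that induction inserted your cycle closes; as written, the implication from (v) back to the rest is unsupported.
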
  
\begin{proof}
We prove (i) $\Leftrightarrow$ (ii), (ii) $\Leftrightarrow$ (iii), (ii) $\Rightarrow$ (iv) $\Rightarrow$ (v) $\Rightarrow$ (ii)  and (ii) $\Leftrightarrow$ (vi).

(i) $\Leftrightarrow$ (ii):

\noindent
Follows from  Proposition \ref{highder}.

(ii) $\Rightarrow$ (iii):

\noindent
 Follows by an induction based on the following induction step. Suppose $ 0 \leq k < n , $ $x$ is a bounded  $n$ times weakly differentiable operator, which is  $k$ times strongly differentiable, then $\d_w^k(x)$ is the $k'$th strong derivative by Theorem \ref{0}, and since this operator is weakly differentiable, the same theorem shows that $\d_w^k(x)$ is strongly differentiable with strong derivative $\d_w^{k+1}(x).$

(iii) $\Rightarrow$ (ii):

\noindent
Follows from the Cauchy-Schwarz inequality.

(ii) $\Rightarrow$ (iv):

\noindent
This follows from Proposition \ref{dcom} 

(iv) $\Rightarrow$ (v):

\noindent
Let $ 1 \leq k \leq n,$ then we are given that $\d_w^k(x)$ exists and is a bounded operator such that $\d_w^k(x)| \mathrm{dom}(D^k) = d^k(x).$ Let $c$ be an integer then $e_cH \subseteq \mathrm{dom}(D^k)$  so for any integer $r$ we get $$e_r\d_w^k(x)e_c = e_rd^k(x)e_c = i^k \sum_{j=0}^k \binom{k}{j} (-1)^{k-j}e_rD^jxD^je_c = d_{\cam}^k(m(x))_{rc},$$ hence $d_{\cam}^k(x)$ is the matrix of a bounded operator and (v) follows. 

(v) $\Rightarrow$ (ii):

\noindent
Assume (v), i.e. that for any $k$ in $\{1, \dots, n\} $ there exists a bounded operator $z_k$ on $H$ such that for any pair of integers $r,c$ we have 
$e_rz_ke_c = d^k_{\cam}(x)_{rc}.$ The case  $k=1$ is covered by Theorem \ref{0}. The proof may be found in \cite{EC}, but we recall the main step, because we will use it repeatedly below. For any vector $\xi$ from $e_cH$ we showed  that $x\xi$ is in dom$(D).$ It then follows that for any  integer $r$ and a vector $\xi$ in $e_cH$ we have  $x\xi $ is in dom$(D)$ and $$ e_rz_1\xi = i(d_re_rxe_c - e_rxe_cd_c)\xi = ie_r(Dx -xD)\xi.$$
and we concluded that $x $ is weakly differentiable with $\d_w(x) = z_1.$ We may now assume that $1 < k \leq n$ and $x$ is weakly differentiable of order $k-1$ with $\d_w^j(x) = z_j$ for $1 \leq j \leq k-1.$ Then for $\xi $ in $e_cH$ we  get $\d_w^{k-1}(x)\xi$ is in dom$(D)$ so we have  
\begin{align*} e_rz_k\xi = & i(d_re_rd^{k-1}_{\cam}(x)e_c - e_rd^{k-1}_{\cam} (x)e_cd_c)\xi \\ = & ie_r(D\d_w^{k-1}(x) -\d_w^{k-1} (x)D)\xi. \end{align*} Hence $\d_w^{k-1}(x)$ is weakly differentiable and $\d_w^k(x) = z_k,$ so $x$ is $n$ times weakly differentiable and (ii) follows.

(ii) $\Rightarrow$ (vi):

\noindent
For any $n$ in $\bn,$  the space dom$(D^n)$ is a core for $D,$ so (vi) follows from (iv), which, in turn,  follows from (ii).
  
(vi) $\Rightarrow$ (ii):

\noindent
Now suppose (vi) holds for a bounded operator $x$ on $H.$ 
Then for $k$ in $\{1, \dots, n\}$ there exist bounded operators $y_k = \mathrm{closure}( d^k(x) | \cf).$ 
Let us look at the case $k=1$ first. Then $(iD)x - x(iD)$ is defined and bounded on the core $\cf$ for $D,$ so by Theorem \ref{0} item (vi) $x$ is in dom$(\d_w)$ and $y_1 = \d_w(x)$. Let us now suppose that $1 < k \leq n $ and we know that $y_{j} = \d_w^{j}(x),$ for $1 \leq j \leq k-1,$ then for any $\xi$ in $\cf$ we can find a sequence of vectors $\xi_n$ in $\cf$ such that $\xi_n \to \xi$ and $D\xi_n \to D\xi$ for $n \to \infty.$  Since $d^k(x)$ is bounded and defined on $\cf $ we have 
\begin{align*} 
y_k\xi = \underset{n \to \infty}{\lim}d^k(x) \xi_n = &\underset{n \to \infty}{\lim}\big((iD)d^{k-1}(x) \xi_n - d^{k-1}(x)(iD) \xi_n \big)\\
=& \underset{n \to \infty}{\lim}\big((iD)\d_w^{k-1}(x) \xi_n - \d_w^{k-1}(x)(iD) \xi_n \big).
\end{align*} Since the last part of these  equations forms a convergent sequence we find that $\underset{n \to \infty}{\lim}(iD)\d_w^{k-1}(x) \xi_n $ exists and 

$$ \underset{n \to \infty}{\lim}(iD)\d_w^{k-1}(x) \xi_n = y_k \xi + \d_w^{k-1}(x)(iD)\xi.$$
Hence $\d_w^{k-1}(x)\xi$ is in dom$(D)$ 
and $$y_k\xi = (iD)\d_w^{k-1}(x)\xi - \d_w^{k-1}(x)(iD)\xi$$ By Theorem \ref{0} we get that $\d_w^{k-1} (x)$ is weakly differentiable and $\d_w^k(x) = y_k,$ so $x$ is $n$ times weakly differentiable, and the theorem follows.
\end{proof}

\section{Reflexive representations of the algebras of higher weakly differentiable operators in a von Neumann algebra.}

In this section we will consider the case where we are dealing with a von Neumann algebra $\cam$ on $ H$ and study aspects of the algebras $C^n(\cam, H)$ of $n$ times higher weakly $D-$differentiable elements inside $\cam,$ but unlike the case in noncommutative geometry we will not assume that $C^n(\cam, D)$ is dense in $\cam$ in any ordinary topology.
The prototype of a von Neumann algebra, or rather the commutative example which may give inspiration for general results  on von Neumann algebras is the algebra of measurable essentially bounded function on the unit circle, $L^{\infty}(\bt, d\theta ),$ and in this setting $C^n(\cam, D)$ is nothing but the $n$ times weakly $D:= -i\frac{d}{d\theta}-$differentiable functions, so we find  here that for $n \geq 1$ we have   $C^n(L^{\infty}(\bt, d\theta ), D) = C^n(C(\bt), D),$ and we may wonder if the von Neumann algebra property plays a role at all ? We have no answer, but this might be because our understanding of the relations between noncommutative and commutative geometry is still quite limited. Below we will describe the property called reflexivity of an algebra of bounded operators, but for the moment just say, that a von Neumann algebra $\cam$  is reflexive and that property is partly inherited by $C^n(\cam, D),$ in the sense  that this algebra has a representation as a reflexive algebra of bounded operators on a Hilbert space.  We will describe the reflexivity  property in details below, but right now we will like to mention that reflexivity is a very strong property for an algebra of operators to have. This follows from von Neumann's bicommutant theorem which shows that if an  algebra of bounded operators on $H$  is self-adjoint and reflexive then it is a von Neumann algebra. The algebras we will study are not self-adjoint, but sub-algebras of the upper triangular matrices in $M_{n+1}(B(H)),$ so von Neumann's theorem does not apply directly in our situation. 

We will remind you of the definition of reflexivity as it was defined by Halmos and described in the book \cite{RR}. 
\begin{definition} Let $H$ be a Hilbert space.
\begin{itemize}
\item[(i)]
Let $\cas$ be a set of bounded operators on a Hilbert space $H$ then $\mathrm{Lat}(\cas)$ is  the lattice of closed subspaces of $H$ which are left invariant by each of the operators in $\cas.$
\item[(ii)] Let $\cg$ denote a collection of closed subspaces of $H$ then $\mathrm{Alg}(\cg)$ is the algebra of bounded operators on $H$ which leave each of the subspaces in $\cg$ invariant. 
\item[(iii)] A subalgebra $\car$ of $B(H)$ is said to be reflexive if \newline $\car = \mathrm{Alg}( \mathrm{Lat}(\car)).$
\end{itemize}
\end{definition}
One of the strong properties of a reflexive algebra of bounded operators on a Hilbert space $K$ is that it is an ultraweakly closed subspace of $B(K)$ and then it becomes a dual space since all the ultraweakly continuous functionals on $B(K)$ form  the predual of $B(K).$ 
Then the reflexive algebra has a predual which is a quotient of the predual of $B(K).$ In the set-up for the classical commutative differential geometry such kinds of dualities are well known and widely used. The reflexivity is actually stronger than this duality property, but so far we have not been able to single out a property which solely depends on the reflexivity of a certain representation of 
the algebra $C^n(\cam, D).$ We will now formulate the result:

\begin{theorem}
Let $\cam $ be a von Neumann algebra on a Hilbert space $H,$  $D$ a self-adjoint operator on $H$ and $n$ a non-negative integer. There exists a unital injective algebraic homomorphism $\Phi_n : C^n(\cam, D) \to B(H\otimes \bc^{n+1}) $ such that the image $\car_n(\cam,D) :=  \Phi_n(C^n(\cam, D))$ is a reflexive algebra on $H\otimes\bc^{n+1}.$
 
For any $x$ in $C^n(\cam, D): \frac{1}{n+1}\|x\|_n \leq \|\Phi_n(x)\| \leq \|x\|_n.$ 
  \end{theorem}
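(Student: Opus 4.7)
My plan is to let $\Phi_n(x)$ be the block upper triangular $(n+1)\times(n+1)$ matrix with $(i,j)$-block $\frac{1}{(j-i)!}\d_w^{j-i}(x)$ for $0\le i\le j\le n$ and zero below the diagonal, equivalently
$$\Phi_n(x)\;=\;\sum_{k=0}^{n}\frac{1}{k!}\,J^{k}\otimes \d_w^{k}(x),$$
where $J$ is the standard nilpotent shift on $\bc^{n+1}$. The Leibniz formula $\d_w^{k}(xy)=\sum_{j=0}^{k}\binom{k}{j}\d_w^{j}(x)\d_w^{k-j}(y)$, obtained by induction from the derivation property of $\d_w$ on $C^n(\cam,D)$, is precisely the entry-wise statement of $\Phi_n(xy)=\Phi_n(x)\Phi_n(y)$. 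Unitality is immediate since $\d_w^{k}(I)=0$ for $k\ge1$, and injectivity follows by reading off the $(0,0)$-block. For the norm estimates, $\|J^{k}\|\le 1$ and the triangle inequality give $\|\Phi_n(x)\|\le\sum_{k=0}^{n}\frac{1}{k!}\|\d_w^{k}(x)\|=\|x\|_n$; conversely each $\frac{1}{k!}\d_w^{k}(x)$ occurs as the compression of $\Phi_n(x)$ to a single block position, so $\frac{1}{k!}\|\d_w^{k}(x)\|\le\|\Phi_n(x)\|$, and summing over $k=0,\dots,n$ yields $\|x\|_n\le(n+1)\|\Phi_n(x)\|$.

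\textbf{Reflexivity: the invariant subspaces.}
The inclusion $\car_n(\cam,D)\subseteq\mathrm{Alg}(\mathrm{Lat}(\car_n(\cam,D)))$ is automatic. For the reverse, given $T\in\mathrm{Alg}(\mathrm{Lat}(\car_n(\cam,D)))$, I would extract its structure from three families of invariant subspaces. First, the filtration $V_k\otimes H$ ($0\le k\le n+1$), with $V_k\subseteq\bc^{n+1}$ spanned by the first $k$ basis vectors, is $\car_n$-invariant since $\Phi_n(x)$ is block upper triangular, so $T$ must itself be block upper triangular. Second, letting $\wt{\cam}:=(\cam\cup\{e^{itD}:t\in\br\})''$, each $\d_w^{k}(x)$ is a weak operator limit of elements of $\wt{\cam}$ and hence lies in $\wt{\cam}$, so $(PH)^{n+1}$ is $\car_n$-invariant for every projection $P\in\wt{\cam}'$; by self-adjointness of $\wt{\cam}'$, invariance of $T$ on this family forces each block $T_{ij}$ to commute with $\wt{\cam}'$, so $T_{ij}\in\wt{\cam}$. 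Third, for each $i\in\{0,\dots,n\}$ and each projection $P\in\cam'$, the subspace $V_i\otimes H+e_i\otimes PH$ (first $i$ coordinates arbitrary, the $i$-th lying in $PH$, the rest zero) is $\car_n$-invariant, because $\cam$ preserves $PH$ and the block-triangular action of $\Phi_n(x)$ multiplies the $i$-th coordinate only by $x$. Invariance of $T$ on this family forces each diagonal block $T_{ii}$ to preserve every $\cam'$-invariant subspace, so $T_{ii}\in\cam''=\cam$.

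\textbf{The main obstacle.}
At this point $T$ is block upper triangular with $T_{ij}\in\wt{\cam}$ and each $T_{ii}\in\cam$; the remaining, genuinely hard task is to show the diagonal blocks all coincide with one $x\in C^n(\cam,D)$ and that each $T_{i,j}$ equals $\frac{1}{(j-i)!}\d_w^{j-i}(x)$. My strategy is to test $T$ against the cyclic invariant subspaces $[\car_n\cdot(e_j\otimes\xi)]$; by the explicit formula for $\Phi_n$ these consist of closures of $\{(\frac{1}{(j-i)!}\d_w^{j-i}(y_\alpha)\xi)_{i=0}^{j}:y_\alpha\in C^n(\cam,D)\}$, so invariance of $T$ produces strong-operator-type approximations $y_\alpha\to T_{jj}$ with $\d_w^{j-i}(y_\alpha)\xi\to(j-i)!\,T_{ij}\xi$ at every $\xi$. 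The delicate step is upgrading these $\xi$-dependent approximations to a single element $x\in C^n(\cam,D)$ realising all super-diagonals simultaneously. For this I would combine the infinite-matrix characterization of Theorem \ref{I}(v), identifying $d_{\cam}^{k}(m(T_{jj}))$ entry by entry with $k!\,m(T_{j-k,j})$ and thereby forcing $T_{jj}\in C^n(\cam,D)$ via the implication (v)$\Rightarrow$(ii), with the core-based criterion of Theorem \ref{I}(vi) to match the super-diagonals with the prescribed derivatives, completing the proof of reflexivity.
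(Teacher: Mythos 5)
The first half of your proposal is essentially correct: your $\Phi_n$ is exactly the paper's map, multiplicativity via the Leibniz formula for $\d_w$ on $C^n(\cam,D)$ is a legitimate (and somewhat more direct) alternative to the paper's device of writing $\Phi_n(x)\big|\mathrm{dom}(D^n)\otimes\bc^{(n+1)} = e^{S_n}\iota(x)e^{-S_n}$ with $S_n = iD\otimes B_n$, and your norm estimates and your three families of invariant subspaces (the flag $V_k\otimes H$, the $\wt{\cam}$-family, and the family forcing $T_{ii}\in\cam$) are all valid. But the proof breaks exactly at the point you yourself flag as delicate, and the strategy you sketch there does not close. Invariance of the cyclic subspace $[\car_n(e_j\otimes\xi)]$ gives you, for each \emph{fixed} $\xi$, a net $y_\a$ with $y_\a\xi\to T_{jj}\xi$ and $\d_w^{k}(y_\a)\xi\to k!\,T_{j-k,j}\xi$; however, your proposed entrywise identification $d_{\cam}^{k}(m(T_{jj}))_{rc}=k!\,m(T_{j-k,j})_{rc}$ requires evaluating $e_r\d_w^{k}(y_\a)e_c$ on $\xi\in e_cH$, which by formula (\ref{mdn}) involves the terms $y_\a d_c^{\,l}\xi = y_\a D^{l}\xi$ — and your net converges only at the single vector $\xi$, not at $D^{l}\xi$. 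Since the net genuinely depends on $\xi$, the limit cannot be taken, and moreover nothing in your lattice so far forces the diagonal blocks $T_{00},\dots,T_{nn}$ (each separately in $\cam$) to be \emph{equal}. This is precisely the gap between single-vector approximation and the exact identities a reflexivity proof needs.

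The missing idea, which is the heart of the paper's proof, is to encode the unbounded operator $D$ itself into the invariant lattice through graph subspaces: $P_j := \{e^{S_j}(\xi\otimes e_j) : \xi\in\mathrm{dom}(D^j)\}$, the graph of a closed operator over the last coordinate, is a closed $\car_n$-invariant subspace, and so is the companion $Q_j$ built in the same way from $\wt{D} := D+I$, which generates the same automorphism group and hence the same algebra $\car_n$. Invariance of $P_n$ and $Q_n$ under $T$ yields exact algebraic equations holding simultaneously for all $\xi\in\mathrm{dom}(D^n)$, with no approximating nets; subtracting the two $(n-1)$st-coordinate equations cancels the derivative terms and forces the last diagonal entry to coincide with the diagonal $x$ already identified (the paper organizes this as an induction on $n$), the $0$th-coordinate equation combined with the binomial identity for left and right multiplication by $iD$ shows that $d^{n}(x)$ is bounded on $\mathrm{dom}(D^n)$, whence $x\in C^n(\cam,D)$ by Theorem \ref{I}, and finally the graph property of $P_n$ over the coordinate $e_n$ annihilates the residual last column of $X-\Phi_n(x)$. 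Without $P_j$ and $Q_j$ (or an equally exact substitute encoding $D$ and the perturbation $D+I$ into the lattice), your cyclic-subspace strategy cannot be completed as described.
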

\begin{proof}
For $n =0$ we have $C^0(\cam, D) = \cam, $ and then $C^0(\cam, D)$ is a reflexive subalgebra of $B(H),$ so we define $\Phi_0 := \mathrm{id}\big|C^0(\cam, D),$ and $\car_0 := \cam.$  For $n > 0$ we will construct a representation $\Phi_n$ of $C^n(\cam, D)$ into the upper triangular matrices with constant diagonals inside the $(n+1)\times (n+1)$ matrices over $B(H)$ such that for an $x$ in $C^n(\cam,D)$ the representation is given by 

$$ \Phi_n(x) := \begin{pmatrix} x & \d_w(x) & \frac{1}{2}\d_w^2(x)& .&.& . & \frac{1}{n!}\d_w^n(x) \\ 
0 & x & \d_w(x) &. &.&. & \frac{1}{(n-1)!•}\d_w^{n-1}(x) \\
.& . & .& .&. &. & . \\
.& . &.& .&. &. & . \\
0& . &.& .&. &\d_w(x) & \frac{1}{2}\d_w^2(x) \\
0 & . &.& . & .& x& \d_w(x) \\
0 & . &.& . &. &0  & x \end{pmatrix}$$

\noindent
and the element in the $j$'th upper diagonal is $\frac{1}{j!}\d_w^j(x).$

We define $\car_n := \Phi_n(C^n(\cam,D)).$
If $D$ is bounded then $\d_w(x) = [iD,x]$ and it is well known that the mapping $\Phi_n$ is a homomorphism and $\car_n$ is an algebra. But now $\d_w(x)$ is the closure of the commutator $[iD,x] $ so elementary algebra does not apply right away. The short proof of the homomorphism property  is then that the results of Theorem \ref{mdn} show that the algebraic arguments are still valid when restricted to take place on the domain dom$(D^n)$ only. We will like to show this with some more details because these arguments will be needed, when we want to show the reflexivity of  $\car_n.$ To set the stage we define $B_n$ as the matrix in $M_{n+1}( \bc) $ with ones in the first upper diagonal and zeros elsewhere. 
$$B_n:= \begin{pmatrix}
0&1&0 &. & 0\\
0&0&1& . &0 \\
. &.& .& .& . \\
 0 & . & .& 0& 1 \\
 0 & . & .& 0& 0 \\
\end{pmatrix}$$

Then $B_n$ is nilpotent and satisfies $B_n^{(n+1)} = 0,$ which will be very useful in the computations to come. First we can describe  $\Phi_n(x)$ inside the tensor product $B(H) \otimes M_{n+1}(\bc) $    as 
$$\Phi_n(x) = x \otimes I + \sum_{j=1}^n \frac{1}{j!} \d_w^j(x) \otimes B_n^j, $$ and we see from Theorem \ref{mdn} that all the elements in the sum are defined as elementary operator theoretical products or sums of such products on the space  dom$(D^n)\otimes \bc^{(n+1)}. $ We will then define $\cd_n = \mathrm{dom}(D^n) \otimes \bc^{(n+1)},$ and the coming computations will all take place on this dense subspace of $H \otimes \bc^{(n+1)}.$  
We will work with matrices of unbounded operators and the first, denoted $S_n$  is defined as 
 $$ S_n :=  iD \otimes B_n, \quad \mathrm{dom}(S_n) =  H\oplus \mathrm{dom}(D)  \oplus \dots \oplus \mathrm{dom}(D).$$

In order to be able to talk on specific matrix elements we suppose that $\bc^{(n+1)}$ is equipped with its canonical basis, and that the basis elements $e_j$ are numbered by $0 \leq j \leq n.$ Then the matrix elements are indexed by $\{ij\}$ with $i,j \in\{0, \dots, n \}$ too.    
   
For $x$ in $C^n(\cam, D)$ the Theorem \ref{mdn} shows that for any $j$ in $\{1, \dots, n\} $ we have $x$dom$(D^j) \subseteq $dom$(D^j)$ so for any set of natural numbers \newline  $j_1, \dots, j_k$ with $j_1 + \dots +j_k \leq n,$  any set of operators $x_0, x_1, \dots, x_k$ in $C^n(\cam, D)$  and any vector $\xi $ in dom$(D^n),$ the vector $\xi$  will be in the domain of definition for $ x_0D^{j_1}x_1 \dots D^{j_k}x_k.$
We will lift this product to the matrices, and in order to do so we  introduce the canonical amplification $\iota(x) $ of $B(H) $ into $M_{(n+1)} (B(H))$ by $\iota(x) := x \otimes I.$ Then for $x_0, x_1, \dots , x_k$ in $C^n(\cam, D)$ we can define a product of operators which always will be defined on  $\cd_n$ by the following convention.

 \begin{align*} &   \iota(x_0) S_n^{j_1}   \iota(x_1) \dots S_n^{j_k}\iota(x_k) \big| \cd_n \\  
 := &\begin{cases} 0 \big|\cd_n \qquad \text{ if } j_1 + \dots + j_k > n \\
\big(\big( x_0(iD)^{j_1} x_1 \dots (iD)^{j_k}x_k \big) \otimes B_n^{(j_1 + \dots + j_k)}\big)\big| \cd_n   \text{ if } j_1 + \dots + j_k \leq n. \end{cases}
\end{align*}

This means that $S_n$ and $\iota(C^n(\cam,D))$ generate an algebra with this special product. The product is a bit more complicated than just the product of the restrictions to $\cd^n$ of each of the factors.  This is because the operator $D$ does not map dom$(D^n)$ into dom$(D^n),$ so 
$S_n$ does not map $\cd_n$ into $\cd_n$  but anyway all the products mentioned make sense by first making the standard operator product and then restricting the outcome to $\cd_n.$ We can then define $\ct_n$ as the algebra of matrices 
defined on $\cd_n$ with this product and  generated by $S_n$ and $\iota(C^n(\cam,D))$ The point of this is that we may now use standard algebra on this associative unital algebra and we define elements $T_n$  and its inverse $T_n^{-1}$ by exponentiating  $S_n.$ The nil-potency of $S_n$ gives us the following formulas inside  this algebra:
\begin{align*}T_n &:= \exp(S_n) = I\otimes I + \sum_{j=1}^n \frac{1}{j!} S_n^j \\
  T_n^{-1} &:= \exp(-S_n) = I\otimes I + \sum_{j=1}^n \frac{1}{j!} (-S_n)^j. 
  \end{align*} 
  
In order to relate  $S_n$ and $T_n$ to the unital algebra $\car_n$ we remind you that in any unital associative algebra $\cc$  with a nilpotent element $s$ we may study the derivation ad$(s) $ on $\cc$ given by ad$(s)(x) := [s,x]$ and we have that $\exp\big(\mathrm{ad}(s)\big)(x) = \exp(s) x \exp(-s).$   
In our setting we then get that for any $x$ in $C^n(\cam,D)$ we have 
$$
\mathrm{ad}(S_n)^j (\iota(x)) \big| \cd_n = \begin{cases} \d_w^j(x) \otimes B_n^j \big|\cd_n \text{ if } j \leq n \\
0\big|\cd_n \text{ if } j > n,\end{cases}  $$ so the equalities above yield the following identities in the algebra $\ct_n.$ 

\begin{align} \label{Phi}   
T_n\iota(x)T_n^{-1} &= \exp(S_n) \iota(x) \exp(-S_n) \\ \notag  
&= \exp(\mathrm{ad}(S_n)) (\iota(x)) \\ \notag  
&= \big(\iota(x) + \sum_{j=1}^n \frac{1}{j!} \d_w^j(x)\otimes B_n^j\big)\big|\cd_n \\\notag  
&= \Phi_n(x) \big|\cd_n. 
\end{align}
  
We can now find a family $\cl_n$ of   closed $\car_n$ invariant subspaces of $H \otimes \bc^{(n+1)} $ such that we will have $\car_n = \mathrm{Alg}(\cl_n),$ and in this way the reflexivity of $\car_n$ will be established. The proof is made by induction  and for the case of $n=0$ the family $\cl_0 $ is just the set Lat$(\cam)$ of closed subspaces which are invariant under any element in $\cam,$ and  and it follows from von Neumann's bicommutant theorem that $C^0(\cam, D ) = \cam = \mathrm{Alg}(\cl_0).$ 
We now assume that $n \geq 1 $ and we define a subset $\cl_n$  of Lat$(\car_n)$ which is so big that  Alg$(\cl_n)  = \car_n.$
Since we are forming an induction argument it is convenient to think of the Hilbert spaces $H\otimes \bc^{(n+1)},$ as a nested family of closed subspaces of $\ell^2(\bn_0, H)$ in the following way,
\begin{align*}  H_0 &:= H \otimes e_0 \\
H_n &:= H\otimes e_0 \oplus \dots \oplus H \otimes e_n,\end{align*}
\noindent
and we will let $K = \ell^2(\bn_0, H)$ and let $E_n$ denote the orthogonal projection of $K$  onto $H_n.$
For each $n $ we will also identify $H_n$ with the abstract tensor product $H\otimes \bc^{(n+1)}$ in the way that an expression $\xi\otimes e_j$ which appears in both spaces are identified.  In this way $\car_n$ may be identified with some upper triangular matrices whose entries are 0 whenever any index is bigger than $n,$ or described as a subspace of the  bounded operators  on $K$ which satisfies $X= E_nXE_n.$ We then see that for $0 \leq j \leq n $ the subspace $H_j$ is invariant for $\car_n,  $ and we also note that for any closed subspace $F$ in Lat$(\cam)$ the subspace $F\otimes e_0$ is invariant for the algebra $\car_n$ too. We will point out 2 more, but closely related examples of  closed subspaces of $H_n$ which are invariant for  $\car_n,$ and we will denote these spaces  $P_n$ and $Q_n.$  First it is practical to redefine $\iota(x)$ to act on $H \otimes \ell^2(\bn_0)$ by   $\iota(x) := x \otimes I_{\ell^2(\bn_0} $ and also redefine $B_n$ as the canonical image of $B_n$ in $B(K) $ under the embedding of $\car_n$ into $E_nB(K)E_n.$  For a natural number $n $ we define a  subspace $P_n$ of $H_n$ by 
\begin{align} 
P_n :&= \{ T_n (\xi \otimes e_n) \, : \, \xi \in \mathrm{dom}(D^n) \} \label{Pn} \\ &=  \{\xi \otimes e_n + \sum_{j=1}^n \frac{1}{j!} (iD)^j \xi  \otimes e_{n-j}\, : \, \xi \in \mathrm{dom}(D^n) \,\} , \notag
\end{align} so this space  is just  the graph of a certain    operator $V_n$ from dom$(D^n)\otimes e_n $ to $H_n.$   
By the closedness of all the powers $(iD)^j$ we see that  $V_n$ is a closed operator, so   $P_n$ is a closed subspace of $H_n$ and 
by the relation (\ref{Phi})
we get that for any $x$ in $C^n(\cam,D)$ and any  $\xi $ in dom$(D^n)$ we  have

\begin{align}
x \xi &\in \mathrm{dom}(D^n) \text{ since } x \in C^n(\cam, D), \\ \notag
 \Phi_n(x) T_n \xi \otimes e_n &=   T_n \iota(x) \xi \otimes e_n = T_n (x \xi) \otimes e_n, \label{cyc}
 \end{align} 
so $P_n$ is an invariant subspace for $\car_n$ acting on $H_n.$  If $j <   n$  then for any  $x$ in $C^n(\cam,D)  \subseteq C^j(\cam, D)$  we see by the  construction of $\Phi_j(x)$ and $\Phi_n(x)  $  that $\Phi_n(x) \big| H_j  = \Phi_j(x)$ Hence for any $j < n $ we also have that $P_j$ is a closed  invariant subspace for $\car_n.$ 

To construct the last invariant subspace we remind you that if we define $\widetilde{D} := D+I$ then $\widetilde{D} $ is also a self-adjoint operator, and since $\exp(it\widetilde{D}) = e^{it}\exp(itD)$ the corresponding  automorphism groups $\widetilde{\a}_t$ and $\a_t$ are identical so for any $n$ in $\bn_0$ we have $C^n(\cam, \widetilde{D}) = C^n(\cam, D) $ and $\widetilde{\d}_w^n = \d_w^n.$ In particular $\widetilde{\car}_n = \car_n$ so a closed subspace of $H_n,$  which is invariant for $\widetilde{\car}_n$ is also invariant for $\car_n.$ We may then repeat the construction made for $P_n$ but now based on $D+I$ to obtain the invariant subspace $Q_n$ which is obtained via the equations below

\begin{align}   
\widetilde{T_n} :&= I\otimes E_n + \sum_{j=1}^n \frac{1}{j!}(i(D+I))^j \otimes B_n^j\\ \label{Qn} Q_n :&= \{\widetilde{T}_n (\xi \otimes e_n) : \xi \in \mathrm{dom}(D^n) \} \\ &=\{\xi \otimes e_n + \sum_{j=1}^n \frac{1}{j!} (i(D+I))^j \xi  \otimes e_{n-j}\, : \, \xi \in \mathrm{dom}((D+I)^n) \,\}   \notag  
\end{align}  
We need   to remark that dom$((D+I)^n)$ equals dom$(D^n),$ and that statement  follows from the binomial formula and the fact that for $j\leq n $ we have dom$(D^n) \subseteq \mathrm{dom}(D^j).$

 We can then define the collection $\cl_n$ of $\car_n$ invariant subspaces of $H_n$ by 
 $$ \cl_n := \cl_0 \cup \{H_j \, : \, 0 \leq j \leq n\, \} \cup \{P_j\, : 1 \leq j \leq n \} \cup \{Q_j\, : 1 \leq j \leq n \}.$$
 
 It is clear that the algebra  Alg$(\cl_n)$ will contain the unit $I$ of $B(K),$  which can never be an element $\car_n,$ whose matrices all have zero entries outside the upper $(n+1)\times (n+1) $ corner, but we will prove by induction that $$\car_n = \{X \in \mathrm{Alg}( \cl_n) \, : \, E_nXE_n = X. \}$$
    The case $n=0$ is already established, so let us assume that $n> 0$ and  the statement is true for  $n-1, $ and let  $X$ be an  operator in Alg$(\cl_n)$ such that $E_nXE_n = X.$  Then $H_{(n-1)}$  is an invariant subspace for $X$ so $XE_{(n-1)} = E_{(n-1)} X E_{(n-1)} $ and  we find immediately that $XE_{(n-1)} $ also leaves all the subspaces in $\cl_{(n-1)} $ invariant and the induction hypothesis tells that there exists an operator $x$ in $C^{(n-1)}(\cam,D) $ such that $XE_{(n-1)} = \Phi_{(n-1)}(x).$ Unfortunately we do not know that the operator $x$ is in $C^n(\cam, D) $ too, but we will show it now and then prove that $X = \Phi_n(x).$  We know that $P_n$ and $Q_n$ are  invariant subspaces for $X$ and from the  equations ( \ref{Pn}) and  ( \ref{Qn})  we  have  descriptions of $P_n$ and $Q_n$ which will become useful. 
Hence let $\xi $ be in dom$(D^n),$ $T_n \xi \otimes e_n$ and $\widetilde{T}_n \xi \otimes e_n$ be the corresponding vectors in $P_n$  and  $Q_n$ respectively.       The invariance of $P_n $ under $X$ has as its first consequence that for the operator entry $x_{nn} $ of $X$ we get $x_{nn} \xi $ is in dom$(D^n). $ If we look at the $(n-1)$'st coordinate of the vector $XT_n \xi  \otimes e_n$ the invariance of $P_n$ under $X$ implies the equation
\begin{equation} \label{xnn}
x(iD)\xi + x_{(n-1)n}\xi = (iD) x_{nn} \xi. 
\end{equation}  By analogy we get a similar equation based on the invariance of $Q_n$ under $X$ so we get 
\begin{equation}
x(i(D+I))\xi + x_{(n-1)n} \xi = (i(D+I)) x_{nn} \xi. 
\end{equation} 
By subtraction of those equations we get \begin{equation} \label{domDn}
\forall \xi \in \mathrm{dom}(D^n): \quad  x \xi =  x_{nn} \xi, 
\end{equation} so since both operators are bounded we have $x_{nn} = x.$  The equation
 (\ref{xnn}) may then be applied to show that $x_{(n-1)n} = \d_w(x)$ and it is possible to continue along this line to show that $X = \Phi_n(x),$ but we will instead address the first element, of the vector $XT_n \xi\otimes e_n,$ since it seems to be easier to write down the details in this case. Let us return to the general setting we studied just in front of the equation (\ref{Phi}) where we have an associative unital algebra $B$  and an element $s$ in $B.$ We will then define operators $L$ and $R$ on $B$ by left and right multiplications by $s,$ so $Lb := sb $ and $Rb:= bs.$ then by the binomial formula we get, 
 since $L$ and $R$ commute that  for any $b$ in $B$ 
 \begin{align}  
 \sum_{j=0}^n \frac{1}{(n-j)!}\frac{1}{j!}\mathrm{ad}(s)^j(b) s^{(n-j)} &= \sum_{j=0}^n \frac{1}{(n-j)!}\frac{1}{j!}(L-R)^jR^{(n-j)}b \label{row0}\\ & =  \frac{1}{n!}L^nb = \frac{1}{n!}s^nb. \notag
 \end{align} 
We will recall the commutator mapping $d$ which we defined in Definition 3.1 as $d(x) := [iD,x]. $ We know from above that $x $ is in $C^{(n-1)}(\cam, D)$ so by Theorem \ref{mdn} for any $j$ in $\{1, \dots , n-1\} $ we have $x$dom$(D^j) \subseteq $ dom$(D^j),  $ from equation ( \ref{domDn}) we have $x$dom$(D^n) \subseteq $ dom$(D^n),  $  so all the expressions $d^j(x) $ are defined on dom$(D^n).$ The algebraic identity (\ref{row0}) then applies and we get
 \begin{equation} \label{did} 
 \sum_{j=0}^n \frac{1}{(n-j)!}\frac{1}{j!}d^j(x) (iD)^{(n-j)}\big|\mathrm{dom}(D^n) = \frac{1}{n!}(iD)^nx\big|\mathrm{dom}(D^n).
 \end{equation}
Since $x$ is in $C^{(n-1)}(\cam,D)$ we have $$d^j(x)\big|\mathrm{dom}(D^n) = \d_w^j(x)\big|\mathrm{dom}(D^n), \text{ for } 0 \leq j \leq n-1.$$

On the other hand the invariance  of $P_n$ shows that 
\begin{align*}  
 &\big((\sum_{j=0}^{(n-1)}  \frac{1}{(n-j)!}\frac{1}{j!}\d_w^j(x) (iD)^{(n-j)}) \, + \, x_{0n}\big)\big|\mathrm{dom}(D^n)\\ = &\frac{1}{n!}(iD)^nx\big|\mathrm{dom}(D^n).
 \end{align*}
 
 By elementary algebra we then get that $$\frac{1}{n!}d^n(x)\big|\mathrm{dom}(D^n) = x_{0n}\big|\mathrm{dom}(D^n),$$ 
 so by Theorem \ref{mdn} we find that $x$ is in $C^n(\cam, D)$ and that $\frac{1}{n!}\d_w^n(x) = x_{0n},$ as expected. Recall that by (\ref{cyc}) $P_n $ is invariant under the elements in $\car_n, $ and with this in mind we get that $P_n$ must be invariant under $Y := (X - \Phi_n(x)),$ which is a column matrix  such that $y_{ij} = 0$ 
 whenever $j \neq n, $ and also satisfies $y_{nn} = 0,$ which is crucial for the next argument.  Given any vector $\xi $ in dom$(D^n)$ with
  corresponding vector $T_n(\xi \otimes e_n) $ in $P_n$ we see that $y_{nn}=0$ implies that the $n$'th coordinate of $YT_n(\xi \otimes e_n)$ is equal to $0,$ but then $YT_n(\xi \otimes e_n) = 0,$ since the space $P_n$ may be thought of as the graph of an operator defined on the last coordinate, which here vanishes. On the other hand, for the given $\xi$ in dom$(D^n)$  we get $0=YT_n(\xi \otimes e_n ) = \sum_{i=0}^{n} (y_{in}\xi) \otimes e_i.$ Hence for $0 \leq i \leq n $ we get $y_{in} = 0$  and then $X = \Phi_n(x)  $ for an $x$ in $C^n(\cam, D) $ and the reflexivity of $\car_n$ is proven. 
  \end{proof}

\bibliographystyle{amsplain}

\end{document}